\def\doctype{}
\newcommand{\comment}[1]{}
\newcommand{\Cfourfive}[2] 
{
\foreach \a in {#1}
 \foreach \b in {#2}
    {
  \draw[blue] (-.22+\a,\b)--(.22+\a,\b)--(-.22+\a,\b+0.5)--(-.22+\a,\b);
  \draw[orange] (-.22+\a,\b+1)--(.22+\a,\b+1)--(.22+\a,\b+0.5)--(\a,\b+0.5)--(-.22+\a,\b+1);
  \foreach \c in {0,1,2}
   \foreach \d in {-1,0,1}
     \filldraw (-.22*\d+\a,\b+0.5*\c) circle [radius=.05];
   }
}
\newcommand{\Cfourtrio}[2] 
{
\foreach \a in {#1}
 \foreach \b in {#2}
    {
  \draw[blue] (-.22+\a,\b)--(.22+\a,\b)--(-.22+\a,\b+0.5)--(-.22+\a,\b);
  \draw[blue] (-.22+\a,\b+1.5)--(.22+\a,\b+1.5) to[out=290,in=70] (.22+\a,\b+0.5) to[out=90,in=-30] (-.22+\a,\b+1.5);
  \draw[blue] (-.22+\a,\b+1)--(.22+\a,\b+1)--(\a,\b+0.5)--(-.22+\a,\b+1);
  \foreach \c in {0,1,...,3}
   \foreach \d in {-1,0,1}
     \filldraw (-.22*\d+\a,\b+0.5*\c) circle [radius=.05];
   }
}
\newcommand{\Cfivetrio}[2] 
{
\foreach \a in {#1}
 \foreach \b in {#2}
    {
  \draw[orange] (-.22+\a,\b+2)--(.22+\a,\b+2)--(.22+\a,\b+1.5)--(\a,\b+1.5)--(-.22+\a,\b+2);
  \draw[orange] (.22+\a,\b)--(-.22+\a,\b)--(-.22+\a,\b+0.5)--(\a,\b+0.5)--(.22+\a,\b);
  \draw[orange] (.22+\a,\b+1)--(-.22+\a,\b+1)--(-.22+\a,\b+1.5) to[out=-20,in=110] (.22+\a,\b+0.5)--(.22+\a,\b+1);
  \foreach \c in {0,1,...,4}
   \foreach \d in {-1,0,1}
     \filldraw (-.22*\d+\a,\b+0.5*\c) circle [radius=.05];
   }
}
\newcommand{\Cthree}[2] 
{
\foreach \a in {#1}
 \foreach \b in {#2}
    {
  \draw (-.22+\a,\b)--(.22+\a,\b) to[out=120,in=60] (-.22+\a,\b);
  \filldraw (-.22+\a,\b) circle [radius=.05];
  \filldraw (\a,\b) circle [radius=.05];
  \filldraw (.22+\a,\b) circle [radius=.05];
   }
}
\newcommand{\Csix}[2] 
{
\foreach \a in {#1}
 \foreach \b in {#2}
    {
  \draw (-.22+\a,\b)--(.22+\a,\b)--(-.22+\a,\b+0.5)--(.22+\a,\b+0.5)--(-.22+\a,\b);
  \foreach \c in {0,1}
   \foreach \d in {-1,0,1}
     \filldraw (-.22*\d+\a,\b+0.5*\c) circle [radius=.05];
   }
}
\newcommand{\Ctop}[2] 
{
\foreach \a in {#1}
 \foreach \b in {#2}
    {
  \draw (\a,\b)--(.22+\a,\b+1)--(-.22+\a,\b+1)--(.22+\a,\b+0.5)--(-.22+\a,\b+0.5)--(.22+\a,\b);
  \filldraw (-.22+\a,\b+1) circle [radius=.05];
  \filldraw (\a,\b+1) circle [radius=.05];
  \filldraw (.22+\a,\b+1) circle [radius=.05];
  \filldraw (-.22+\a,\b+0.5) circle [radius=.05];
  \filldraw (\a,\b+0.5) circle [radius=.05];
  \filldraw (.22+\a,\b+0.5) circle [radius=.05];
   }
}
\newcommand{\Cbot}[2] 
{
\foreach \a in {#1}
 \foreach \b in {#2}
    {
  \draw (\a,\b+1)--(-.22+\a,\b)--(.22+\a,\b)--(-.22+\a,\b+0.5)--(.22+\a,\b+0.5)--(-.22+\a,\b+1);
  \filldraw (-.22+\a,\b) circle [radius=.05];
  \filldraw (\a,\b) circle [radius=.05];
  \filldraw (.22+\a,\b) circle [radius=.05];
  \filldraw (-.22+\a,\b+0.5) circle [radius=.05];
  \filldraw (\a,\b+0.5) circle [radius=.05];
  \filldraw (.22+\a,\b+0.5) circle [radius=.05];
   }
}
\numberwithin{equation}{section}
\let\oldsection\section
\newcommand\boldsection[1]{\oldsection{\bf #1}}
\newcommand\starsection[1]{\oldsection*{\bf #1}}
\renewcommand\section{\@ifstar\starsection\boldsection}
\newtheoremstyle{theorem}
  {12pt}		  
  {0pt}  
  {\sl}  
  {\parindent}     
  {\bf}  
  {. }    
  { }    
  {}     
\theoremstyle{theorem}
\newtheorem{thm}{Theorem}[section]  
\newtheorem{lemma}[thm]{Lemma}     
\newtheoremstyle{definition}
  {12pt}		  
  {0pt}  
  {}  
  {\parindent}     
  {\bf}  
  {. }    
  { }    
  {}     
\theoremstyle{definition}
\renewcommand{\proofname}{Proof}
\renewcommand*\@maketitle{%
  \normalfont\normalsize
  \@adminfootnotes
  \@mkboth{\@nx\shortauthors}{\@nx\shorttitle}%
  \global\topskip42\p@\relax 
  \@settitle
  \ifx\@empty\authors \else {\vskip 1em
\vtop{\centering\shortauthors\@@par}} \fi
  \ifx\@empty\@date \else {\vskip 1em \vtop{\centering\@date\@@par}}\fi 
  \ifx\@empty\@dedicatory
  \else
    \baselineskip18\p@
    \vtop{\centering{\footnotesize\itshape\@dedicatory\@@par}%
      \global\dimen@i\prevdepth}\prevdepth\dimen@i
  \fi
  \@setabstract
  \normalsize
  \if@titlepage
    \newpage
  \else
    \dimen@34\p@ \advance\dimen@-\baselineskip
    \vskip\dimen@\relax
  \fi
} 
\renewcommand*\@adminfootnotes{%
  \let\@makefnmark\relax  \let\@thefnmark\relax
  \ifx\@empty\@subjclass\else \@footnotetext{\@setsubjclass}\fi
  \ifx\@empty\@keywords\else \@footnotetext{\@setkeywords}\fi
  \ifx\@empty\thankses\else \@footnotetext{%
    \def\par{\let\par\@par}\@setthanks}%
  \fi
\thispagestyle{titlepage}
}
\begin{document}

\title[Leaves for maximum $2$-$(24,4,1)$ packings]{\large Leaves for maximum $2$-$(24,4,1)$ packings}

\author{Yanxun~Chang}
\address{\rm Yanxun~Chang:
Mathematics,
Beijing Jiaotong University,
Beijing,
P.R.~China
}
\email{yxchang@bjtu.edu.cn}

\author{Peter J.~Dukes}
\address{\rm Peter J.~ Dukes:
Mathematics and Statistics,
University of Victoria, Victoria, Canada
}
\email{dukes@uvic.ca}

\author{Tao Feng}
\address{\rm Tao Feng:
Mathematics,
Beijing Jiaotong University,
Beijing,
P.R.~China
}
\email{tfeng@bjtu.edu.cn}

\thanks{Research of Yanxun Chang is supported by NSFC grant 11431003; research of Peter Dukes is supported by NSERC grant 312595--2017; research of Tao Feng is supported by NSFC grant 11471032; research of this paper was also partially supported by 111 Project of China, grant number B16002.}


\maketitle


This is a supplement for Lemma 3.1 in the paper {\it Leaves for packings with block size four}.

\begin{lemma}\label{all24}
Any possible $2$-regular graph on $24$ vertices is the leave of some MP$(24,4)$.
\end{lemma}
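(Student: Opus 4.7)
The plan is to reduce the statement to a finite case check. Up to isomorphism, a $2$-regular graph on $24$ vertices is determined by the multiset of cycle lengths appearing in it, equivalently by a partition of $24$ into parts of size at least $3$. There are somewhat over one hundred such partitions, so the lemma is equivalent to exhibiting, for each one, a single MP$(24,4)$ whose leave has the corresponding cycle type. The necessary divisibility conditions from Section~\ref{intro} are automatically satisfied, since $|E(L)|=24\equiv 0\pmod 6$ and $\deg_L(x)=2\equiv 23\pmod 3$ for every $x$, so every such cycle structure is \emph{a priori} admissible.

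Since $\binom{24}{2}-24 = 252 = 6\cdot 42$, any MP$(24,4)$ with $2$-regular leave consists of exactly $42$ blocks, and the task reduces, for each candidate leave $L$ on $\{1,\dots,24\}$, to finding an edge-decomposition of $K_{24}-L$ into $42$ copies of $K_4$. Two strategies are both reasonable. The first is to start from the MP$(24,4)$ with leave $8C_3$ supplied by Corollary~\ref{K2C3}(b) and to transform it by iterated local block swaps (``trades'') that modify the cycle structure of the leave while preserving the property of being a maximum packing. The second is to run a direct backtracking search for each target $L$, using an explicit initial block to break symmetry, imposing an easy-to-check group action where one exists, and propagating the constraint that every edge of $K_{24}-L$ be used in at most one block.

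The main obstacle is computational breadth rather than any single delicate case: every one of the admissible cycle types must be handled, and some of the more irregular partitions (for instance $C_{21}+C_3$ or $C_{17}+C_4+C_3$) afford essentially no automorphism group to exploit, so the backtracking cannot be shortcut by orbit arguments. For this reason we defer the explicit list of packings, one per admissible leave, to the supplementary file cited in the statement of the lemma; independent verification of each entry is a routine check that each stored family of $42$ $4$-subsets is pairwise edge-disjoint and that the missed edges form the prescribed $2$-regular graph.
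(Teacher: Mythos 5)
Your proposal follows essentially the same route as the paper: the lemma is settled by a finite computational check, one explicit MP$(24,4)$ for each admissible cycle type (partition of $24$ into parts at least $3$), with the packings themselves recorded in the cited supplementary file rather than in the paper body. Your reduction (divisibility conditions hold automatically, $42$ blocks, one case per partition) is correct and matches the paper's treatment, so there is nothing further to compare.
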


\proof The leave $L$ of an MP$(24,4)$ consists of disjoint cycles covering all $24$ vertices. If $L$ contains $u_i$ cycles of size $g_i$, $1\leq i\leq r$, then $L$ is of {\em type} $g_1^{u_1}g_2^{u_2}\cdots g_r^{u_r}$. If $L$ is of type $24^1$, then the packing is a BSEC$(24,4,1)$, \cite{CL}. If $L$ is of type $3^8$, then the packing is a GDD$(3^8,4)$, \cite{hcd}. If $L$ is of type $4^6$, then it is equivalent to a matching divisible design GDD($M_2^6,4)$, \cite{DFL}.

There are 110 ways to partition  24 into integers greater than 2. Given a leave $L$ of any of the 107 other types (apart from $24^1$, $3^8$, $4^6$ considered above), we explicitly construct an MP$(24,4)$ with leave $L$ according to the following conventions. The vertex set is taken to be $\{0,1,\ldots,23\}$. We label the cycles in $L$ naturally. For instance, if $L$ is of type $3^6 6^1$, then it consists of cycles $(3i,3i+1,3i+2)$, $0\leq i\leq 5$, and $(18,19,20,21,22,23)$. Only `base blocks' are listed below. The set of all blocks is obtained by developing these base blocks under the action of some group $G=\langle \alpha \rangle$, where $\alpha \in \mathcal{S}_n$ is presented as a product of disjoint (permutation) cycles. Base blocks marked with a $*$ generate short orbits.

$\cdot$ Type $3^6 6^1$: $\alpha=(0, 3, 6)(1, 4, 7)(2, 5, 8)(9, 12, 15)(10, 13, 16)(11, 14, 17)(18, 20, 22)(19, 21, 23)$.
\begin{center}

\end{center}

\end{document}